\newtheorem{thm}{Theorem}[section]
\newtheorem{lemme}[thm]{Lemma}
\newtheorem{rem}[thm]{Remark}
\def\B{{\mathbb{B}}}
\def\S{{\mathbb{S}}}
\def\R{{\mathbb{R}}}
\def\C{{\mathbb{C}}}
\def\N{{\mathbb{N}}}
\numberwithin{equation}{section} 
\numberwithin{figure}{section} 
\numberwithin{table}{section} 
\begin{document}
	\title{Construction of labyrinths in pseudoconvex domains}
	\author{S. Charpentier, \L . Kosi\'nski}
	\subjclass[2010]{Primary 32H02 ; Secondary 32E10}
	\keywords{several complex variables, labyrinth}
	\thanks{The first author is partly supported by the grant ANR-17-CE40-0021
		of the French National Research Agency ANR (project Front). The second author is partly supported by the NCN grant SONATA BIS no. 2017/26/E/ST1/00723}
	\address{St\'ephane Charpentier, Institut de Mathematiques, UMR 7373, Aix-Marseille
		Universite, 39 rue F. Joliot Curie, 13453 Marseille Cedex 13, France}
	\email{stephane.charpentier.1@univ-amu.fr}
	\address{\L ukasz Kosi\'nski, Institute of Mathematics, Jagiellonian University, \L ojasiewicza 6, 30-348 Krak\'ow, Poland}
	\email{lukasz.kosinski@im.uj.edu.pl}
	
	\begin{abstract}We build in a given pseudoconvex (Runge) domain $D$ of $\C^N$ a $\mathcal O(D)$ convex set $\Gamma$, every connected component of which is a holomorphically contractible (convex) compact set, enjoying the property that any continuous path $\gamma:[0,1)\rightarrow D$ with $\lim _{r\rightarrow 1}\gamma(r)\in \partial D$ and omitting $\Gamma$ has infinite length. This solves a problem left open in a recent paper by Alarc\'on and Forstneri\v{c}.
	\end{abstract}
	
	\maketitle

\section{Introduction}
Alarc\'on and Forstneri\v{c} recently proved that the Euclidean ball $\B_N$ of $\C^N$, $N>1$, admits a nonsingular
holomorphic foliation by complete properly embedded holomorphic discs \cite[Theorem 1]{AlaFor}. They asked the natural question whether their result extends to any Runge pseudoconvex domains.
As explained in \cite[Remark 1]{AlaFor}, the main obstruction that appears is how to construct a \emph{suitable labyrinth} in such a domain. Here and in the sequel we call labyrinth of a given pseudoconvex domain $D$ in $\C^N$ a set $\Gamma$ in $D$ with the property that any continuous path $\gamma:[0,1)\rightarrow D$, with $\lim _{r\rightarrow \infty}\gamma(r) \in \partial D$, whose image does not intersect $\Gamma$, has infinite length. Such sets were already built in pseudoconvex domains by Globevnik, by properly embedding the pseudoconvex domain as a submanifold of $\C^{2N+1}$ \cite{Glob-math-annalen}, thus reducing the problem to a construction in $\B_N$ \cite{Glob-annals}. However Globevnik's construction in \cite{Glob-annals,Glob-math-annalen} did not provide with good topological properties of the connected components of the labyrinth, such as convexity or holomorphic contractibility. In \cite{AlaGlobLop} the authors simplified Globevnik's construction building a labyrinth in $\B_N$ whose connected components are balls in suitably chosen affine real hyperplanes. Alarc\'on and Forstneri\v{c} used a slight modification of this construction to obtain \cite[Theorem 1]{AlaFor}.

The main aim of this short note is to overcome the difficulty pointed out in \cite[Theorem 1]{AlaFor} and extend the construction made in \cite{AlaGlobLop} to pseudoconvex domains.


\begin{thm}\label{main-thm}
	Let $D$ be a pseudoconvex domain in $\C^N$ and let $(D_n)$ be an exhaustion of strictly pseudoconvex domains that are $\mathcal O(D)$-convex. Let also $(M_n)$ be a sequence of positive numbers. Then there are holomorphically contractible compact sets $\Gamma_n\subset D_{n+1}\setminus\overline D_{n}$ such that $\overline D_n\cup\bigcup_{j=n}^m \Gamma_j$ is $\mathcal O(D)$-convex for every $m\geq n$, and any continuous path connecting $\partial D_n$ to $\partial D_{n+1}$ and omitting $\Gamma_n$ has length greater than $M_n$.
	
	
	Moreover, if $D$ is Runge then it can be additionally assumed that each connected component of $\Gamma$ is the image of a $2N-1$-convex body under an automorphism of $\C^N$.
\end{thm}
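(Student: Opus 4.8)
\emph{Setup and local model.} The plan is to build each $\Gamma_n$ layer by layer inside the shell between $\partial D_n$ and $\partial D_{n+1}$, and then, in the Runge case, to upgrade the local model maps to global automorphisms. First I would fix a smooth strictly plurisubharmonic exhaustion $\rho$ of $D$ with $D_n=\{\rho<c_n\}$ for an increasing sequence $(c_n)$; this is available because the $D_n$ are strictly pseudoconvex, $\mathcal O(D)$-convex, and exhaust $D$. Working in the shell $S_n=\{c_n\le\rho\le c_{n+1}\}$, strict pseudoconvexity furnishes at each level point a local biholomorphism carrying a piece of a level hypersurface $\{\rho=t\}$ onto a piece of a real hyperplane and the inner side onto a locally convex set; by compactness finitely many such charts cover $S_n$. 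In these charts the walls will be convex bodies lying in real hyperplanes, pulled back to $D$. Each such piece is convex, hence polynomially convex, hence $\mathcal O(D)$-convex, and, as a biholomorphic image of a convex body, holomorphically contractible.

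\emph{The labyrinth and the length estimate.} I would then insert many intermediate levels $c_n<t_1<\cdots<t_K<c_{n+1}$. On each $\{\rho=t_k\}$ I place a finite family of the convex wall-pieces that together cover the level hypersurface except for small gaps, arranging the gaps on consecutive levels to be laterally displaced by a fixed amount $\delta>0$. Any continuous path from $\partial D_n$ to $\partial D_{n+1}$ avoiding $\Gamma_n$ must then pass through one gap per level and so move laterally by at least $\delta$ between consecutive levels, giving length at least $K\delta$; choosing $K$ with $K\delta>M_n$ yields the length bound. Setting $\Gamma_n$ to be the union of all these pieces, its connected components are the individual convex pieces, which settles their topology.

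\emph{Holomorphic convexity (the crux).} The main work is to show that $\overline D_n\cup\bigcup_{j=n}^m\Gamma_j$ is $\mathcal O(D)$-convex for every $m$. Since $D$ is pseudoconvex, $\mathcal O(D)$-convexity coincides with plurisubharmonic convexity, so it suffices to separate every exterior point by a psh function. I would add the layers outward one level at a time: if $A$ is $\mathcal O(D)$-convex and contained in $\{\rho\le t\}$, and the next layer is a finite union of mutually separated convex pieces on $\{\rho=t'\}$ with $t'>t$, then a convex combination of $\rho$ with local peak functions attached to each convex piece produces, for any exterior point, a psh function witnessing that the point lies outside the hull. The pieces must therefore be small, pairwise separated (which the tiling provides) and separated from lower layers (which the level structure provides); balancing this separation against the near-covering needed for the length estimate is the delicate point, and is exactly the obstruction flagged in \cite{AlaFor}. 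This step I expect to be the main obstacle.

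\emph{The Runge upgrade (the moreover).} Assuming $D$ Runge, I would replace each local straightening biholomorphism by a global automorphism of $\C^N$. Since $D$ is Runge and the relevant sublevel sets and convex bodies are polynomially convex, Anders\'en--Lempert and Forstneri\v{c}--Rosay approximation lets me $\mathcal C^1$-approximate, on the relevant compacts, the local map flattening a given level piece to a real hyperplane by some $\Phi\in\mathrm{Aut}(\C^N)$. I then define the corresponding component of $\Gamma$ to be $\Phi(B)$ with $B$ a genuine $(2N-1)$-convex body. Here $\Phi(B)$ is polynomially, hence $\mathcal O(D)$-, convex because $f\mapsto f\circ\Phi$ preserves $\mathcal O(\C^N)$-convexity and $B$ is convex, and it is holomorphically contractible as a biholomorphic image of a convex body. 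Taking the approximation close enough, the length estimate and the holomorphic convexity of the finite unions persist under the small perturbation, which delivers each connected component in the desired form.
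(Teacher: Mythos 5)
The step you yourself flag as ``the main obstacle'' --- the $\mathcal O(D)$-convexity of $\overline D_n\cup\bigcup_{j=n}^m\Gamma_j$ --- is indeed where the proposal has a genuine gap, and the mechanism you suggest for it would not work as described. Taking ``a convex combination of $\rho$ with local peak functions attached to each convex piece'' is not a construction: plurisubharmonic hulls are global objects, and a local biholomorphism flattening a piece of a level hypersurface gives no control over the hull of the pulled-back convex body inside all of $D$. Worse, your labyrinth is designed to cover each level hypersurface ``except for small gaps''; a union of pieces that nearly fills a closed strictly pseudoconvex hypersurface is precisely the kind of compact set whose $\mathcal O(D)$-hull tends to fill in the enclosed region, so the near-covering needed for your length estimate works directly against the convexity you must prove. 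The paper resolves this tension differently: it uses the Diederich--Fornaess--Wold exposing theorem \cite{Died-For-Wold} to obtain a \emph{global} holomorphic embedding $\Phi_x:\overline D\to\overline\B_N$ of the whole strictly pseudoconvex domain, performs the Alarc\'on--Globevnik--L\'opez tangent-ball construction \cite{AlaGlobLop} in the ball where each new piece is separated from the union of all previous ones by a genuine real hyperplane, and then invokes Kallin's separation theorem \cite[Theorem 1.6.19]{Stout} to conclude polynomial, hence $\mathcal O(D)$-, convexity inductively. The hyperplane separation (rather than a near-tiling) is what makes the hull computation possible, and the length estimate is then obtained not from displaced gaps but from the divergence of $\sum_j\sqrt{s_j-s_{j-1}}$ in the AGL scheme.

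A second, related gap is in your length estimate itself: the ``gaps laterally displaced by $\delta$'' argument is stated as if the level hypersurfaces were globally flattened, but your flattening is only chart-by-chart, and a path may wander between charts, exploiting mismatches of the gap patterns along chart boundaries. The paper handles this with an explicit patching argument in Lemma \ref{lem3}: cover $\partial D$ by finitely many exposing neighbourhoods $U_{x_1},\dots,U_{x_k}$ whose boundaries near $\partial D$ are pairwise $\delta$-separated, list them so each recurs at least $n$ times with $n\delta>M$, build one local labyrinth per occurrence at successively smaller distances to $\partial D$, and then argue the dichotomy: either the path stays in a single $U_j$ long enough to be caught by that chart's labyrinth, or it must jump between charts at least $n$ times and so has length greater than $n\delta$. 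Your Runge upgrade via Anders\'en--Lempert is a reasonable idea and close in spirit to what the paper's construction yields, but it too presupposes that the convexity and length estimates have already been secured for the unperturbed model, which is exactly the part that is missing.
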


It will follow from the proof that the $2N-1$-convex bodies appearing in the theorem above are $\mathbb R$-linear transformations of $2N-1$-dimensional balls. Proceeding as in \cite{AlaFor} one can use Theorem \ref{main-thm} to obtain the analogue of \cite[Theorem 1]{AlaFor} for Runge pseudoconvex domain, and thus answer the question posed by the authors.

The proof of Theorem \ref{main-thm} consists in three steps: first, to slightly adapt the main argument of \cite{AlaGlobLop} in order to build a suitable labyrinth in a strictly convex domain; second, to make use of a result by Diederich, Fornaess and Wold \cite{Died-For-Wold} to perform the construction in any strictly pseudoconvex domain, and three, to exhaust the given pseudoconvex domain by strictly pseudoconvex ones.


\section{Proof of Theorem \ref{main-thm}}

Let $D$ be a pseudoconvex domain of $\C^N$ and let us denote by $\mathcal{O}(D)$ the space of holomorphic functions on $D$.

Let us first recall some classical notions and results about polynomial and holomorphic convexity. We refer the reader to Stout's book \cite{Stout}.  Let $K$ and $L$ be compact subsets of a pseudoconvex domain $D$ of $\C^N$. We shall say that $K$ and $L$ are polynomially separated (respectively holomorphically separated with respect to $D$ - or simply $\mathcal{O}(D)$-separated) if there exists a holomorphic polynomial $P$ (resp. a function $f\in\mathcal{O}(D)$) such that $\widehat{p(K)}\cap \widehat{p(L)}$ is empty (resp. $\widehat{f(K)}\cap \widehat{f(L)}=\emptyset$). In particular if there exists a real hyperplane $\mathcal{H}$ such that $K$ is contained in one of the connected component of $\C^N\setminus \mathcal{H}$ and $L$ is contained in the other one, then $K$ and $L$ are polynomially separated. In this case, we will simply say that $K$ and $L$ are separated by a hyperplane. Note that if $K$ and $L$ are $\mathcal{O}(D)$-separated with $D=\C^N$, then $K$ and $L$ are polynomially separated.

It follows from a classical result of Kallin \cite[Theorem 1.6.19]{Stout}, that if $K$ and $L$ are both polynomially convex (resp. $\mathcal{O}(D)$-convex) and polynomially separated (resp. $\mathcal{O}(D)$-separated), then $K \cup L$ is polynomially convex (resp. $\mathcal{O}(D)$-convex).


\medskip{}

Let us now proceed with the proof of Theorem \ref{main-thm}. As a first step, we shall state an analogue of \cite[Theorem 1.5]{AlaGlobLop} for strictly convex domains.

\begin{lemme}\label{lem1}
	Let $D$ be a strictly convex domain in $\C^n$ and let $x\in \partial D$. There exists a neighborhood $U$ of $x$ such that for any $M>0$ and any compact set $K \subset D$ intersecting $U$, there exists a compact set $\Gamma\subset D$ with the following two properties:

	(i) $\Gamma$ can be written as a finite union $\bigcup_i \Gamma_i$, where each $\Gamma_i$ is a convex body in a hyperplane, and any $\Gamma_i$ is separated from $\bigcup_{j<i} \Gamma_j$ by a hyperplane.
	
	(ii) The length of any continuous path $\gamma:[0,1)\rightarrow D\setminus \Gamma$, with $\lim_{r\rightarrow}\gamma(r)\in\partial U \cap D$ and such that $\gamma(r_0)\in K\cap U$ for some $r_0\in[0,1)$ and  $\gamma(r)\in U$ for any $r_0<r<1$, is greater than $M$.
\end{lemme}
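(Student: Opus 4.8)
The plan is to reproduce, in a cap $U\cap D$ around the boundary point $x$, the staircase labyrinth of \cite{AlaGlobLop}, using the strict convexity of $D$ as a substitute for the rotational symmetry of $\B_n$. First I would normalise the situation: after an affine change of coordinates I assume $x=0$, that the real supporting hyperplane of $D$ at $0$ is $\{u=0\}$ where $u(z)=\langle z,e\rangle$ for the inner unit normal $e$, and that $D$ lies locally in $\{u>0\}$. Strict convexity then guarantees that for every small $s>0$ the slice $B_s:=\overline D\cap\{u=s\}$ is a genuine compact convex body, that these bodies shrink to $\{0\}$ as $s\to0$ (the boundary is pinched like a paraboloid at $0$), and that each $B_s$ admits supporting hyperplanes meeting $\partial D$ transversally. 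I would fix $U$ to be a cap of the form $\{u<\delta\}\cap\{|z|<\rho\}$ with $\rho$ large compared with $\sqrt\delta$, so that the only part of $\partial U$ inside $D$ is the inner face $\{u=\delta\}\cap D$, and so that $K\cap U$ lies in a range of levels separated from that face by a whole interval of values of $u$.

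Next I would build $\Gamma$ as a finite family of convex walls carried by parallel real hyperplanes. Choosing levels $a_1<\dots<a_p$ in the controlled range with $p$ large and consecutive levels $\varepsilon$-close, I place in each hyperplane $\{u=a_k\}$ a single convex body $\Gamma_k$ obtained from (a slight inner retract of) the slice $B_{a_k}$ by removing a window, the windows of consecutive levels being pushed a fixed transverse distance $2\ell$ apart; concretely $\Gamma_k$ is the intersection of the retracted slice with a half-space $\{v\ge-\ell\}$ for odd $k$ and $\{v\le \ell\}$ for even $k$, so that each $\Gamma_k$ is a convex body in its hyperplane and the passages left open at successive levels sit on opposite sides. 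I would take $p$ so large that $(p-1)\,\ell>M$.

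For the length estimate (ii) I would argue that, because the levels $\{u=a_k\}$ separate $K\cap U$ from the inner face $\partial U\cap D$, an admissible path $\gamma$ must cross every one of them while remaining in $U\setminus\Gamma$; at the crossing of level $a_k$ it is forced into the open window, i.e.\ into $\{v<-\ell\}$ for odd $k$ and $\{v>\ell\}$ for even $k$, so between two consecutive crossings its $v$-coordinate varies by at least $2\ell$. Summing over the $p-1$ consecutive pairs gives length at least $(p-1)\,\ell>M$, and a path non-monotone in $u$ only meets more levels. The delicate point, which I expect to be the main obstacle, is to rule out \emph{short} escapes that hug $\partial D$ and cross each level near the rim rather than through the prescribed window; here strict convexity must be used in an essential way, exactly as in \cite{AlaGlobLop}, to confine the admissible crossings to the windows (this is what the pinching of the slices $B_s$ and the transversal supporting hyperplanes buy us), and getting this confinement clean while keeping $\Gamma$ compact in $D$ is the heart of the matter.

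Finally, condition (i) is where the ordering becomes critical, and it is the reason for carrying exactly one convex wall per hyperplane. I would list the walls by \emph{decreasing} level, $\Gamma_{(1)}:=\Gamma_p,\ \Gamma_{(2)}:=\Gamma_{p-1},\dots$, so that when $\Gamma_{(i)}$ is added it lies in $\{u=a_{p-i+1}\}$ while every previously chosen wall lies in $\{u\ge a_{p-i+2}\}$; the single hyperplane $\{u=(a_{p-i+1}+a_{p-i+2})/2\}$ then separates $\Gamma_{(i)}$ from $\bigcup_{j<i}\Gamma_{(j)}$. Since there is one wall per level there is no within-level separation to organise, so (i) holds as stated, and together with the polynomial convexity of each convex body this is exactly the input needed for the Kallin-type argument recalled before the lemma. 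Strict convexity enters this last step as well, guaranteeing that the slices are true convex bodies and that the separating hyperplanes can be taken inside $D$; it is the feature that lets the symmetric construction of \cite{AlaGlobLop} be run near an arbitrary boundary point.
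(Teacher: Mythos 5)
Your construction is genuinely different from the paper's. The paper straightens $\partial D$ near $x$ onto the sphere by a diffeomorphism $\mathcal C^2$-close to the identity and pulls back the labyrinth of \cite{AlaGlobLop}: nets of tangent $(2N-1)$-discs of radius $r_j=a\sqrt{s_j-s_{j-1}}$, $2tr_j$-separated and $2tcr_j$-dense, placed on the pullbacks $S_{j,k}$ of the spheres $s_{j,k}\S_N$, with $\sum_j\sqrt{s_j-s_{j-1}}=\infty$. You replace this by finitely many ``walls with alternating windows'' carried by the parallel hyperplanes $\{u=a_k\}$. Your separation statement (i) does survive (one convex wall per level, ordered by decreasing level), but the length estimate (ii) does not, and the obstruction is exactly the one you flag and then leave unresolved: the boundary-hugging escape. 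Each wall $\Gamma_k$ is compact in $D$, hence at positive distance from $\partial D$, so $(\{u=a_k\}\cap D)\setminus\Gamma_k$ contains, besides the intended window, a rim collar along $\{u=a_k\}\cap\partial D$. A path can pay a one-time cost of order $\mathrm{dist}(K,\partial D)$ to reach this collar and then cross every level through the rim, never entering a window and never moving in $v$; its total length is $O(\mathrm{diam}\, U)$, independent of $M$ and of $p$. No choice of retraction parameters fixes this, because the rim gaps at consecutive levels are adjacent to one another along $\partial D$. This is not a technicality; it is precisely why the construction of \cite{AlaGlobLop} uses tangent discs whose radius $\sim\sqrt{s_j-s_{j-1}}$ is calibrated so that each disc protrudes from its shell almost to the next one, arranged in a net that is dense at scale $<r_j$: the resulting labyrinth has no uniform gap near $\partial D$, and a path, however close to $\partial D$, pays $\gtrsim r_j$ per shell, with $\sum_j r_j=\infty$.

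A second problem is the choice of level function. The hyperplanes $\{u=a_k\}$ are transverse to $\partial D$ away from $x$, so they separate $K\cap U$ only from the inner face $\{u=\delta\}$ of your cap, not from $\partial D\cap U$. In the way the lemma is used in the proof of Lemma \ref{lem3}, the labyrinth must lengthen paths tending to a point of $\partial D$ inside $U$ (the condition there reads ``connecting $K$ to $U\cap\partial D$''); such a path can escape sideways at a fixed value of $u$, crossing none of your walls. The paper's shells $S_{j,k}$ are pullbacks of spheres and hence nested barriers parallel to $\partial D$: every escape to $\partial D\cap U$ must cross all of them. To repair your argument you would have to replace $\{u=a_k\}$ by hypersurfaces parallel to $\partial D$ (say, level sets of a strictly convex defining function) and then still solve the rim problem on each of them, at which point you are essentially forced back to the net-of-tangent-discs construction that the paper transports from the ball.
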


We only sketch the proof, as it is a simple modification of that of \cite[Theorem 1.5]{AlaGlobLop}. It makes use of \cite[Lemma 2.1]{AlaGlobLop}, that we recall below for notational convenience.

\begin{lemme}\label{lem2}There exist numbers $m\in \N$, $m\geq 2$, and $c\in \R$, $0<c<1/2$, depending only on $N$, such that for any $r>0$, there exist finitely many subsets $F_1,\ldots,F_m$ of $\S_N$ which satisfy the following:
	
	(i) $|p-q|\geq r$ for all $p,q\in F_j$, $p\neq q$, $j=1,\ldots,m$;
	
	(ii) If $F:=\bigcup_{j=1}^mF_j$ then $F\neq \emptyset$ and $\text{dist}(p,F)\leq cr$ for all $p\in \S_N$.
\end{lemme}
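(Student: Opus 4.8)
The plan is to produce the net $F$ first as a single well-chosen subset of $\S_N$, and then to split it into the pieces $F_1,\dots,F_m$ by a greedy graph-coloring argument. Fix once and for all a constant $c\in(0,1/2)$ depending only on $N$ (for instance $c=1/4$); the value of $m$ will be read off from a packing estimate below. Given $r>0$, let $F\subset\S_N$ be a \emph{maximal} $cr$-separated set, that is, a maximal subset with $|p-q|\geq cr$ for all distinct $p,q\in F$. Such an $F$ is finite by compactness of $\S_N$ and nonempty; this is the set $F$ of the statement.

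Property (ii) is immediate from maximality: if some $p\in\S_N$ had $\text{dist}(p,F)>cr$, then $F\cup\{p\}$ would still be $cr$-separated, contradicting maximality of $F$. Hence $\text{dist}(p,F)\leq cr$ for every $p\in\S_N$, which is exactly (ii).

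To obtain the decomposition in (i), I introduce the graph $G$ on the vertex set $F$ in which two points $p,q$ are joined by an edge whenever $0<|p-q|<r$, and I bound its degrees uniformly. If $q_1,\dots,q_k$ are the neighbours of a fixed vertex $p$, then the Euclidean balls $B(p,cr/2),B(q_1,cr/2),\dots,B(q_k,cr/2)$ in $\C^N\cong\R^{2N}$ are pairwise disjoint (their centres are $cr$-separated) and all contained in $B(p,r+cr/2)$ (each neighbour lies within distance $r$ of $p$). Comparing volumes gives
\[
(k+1)\Big(\tfrac{cr}{2}\Big)^{2N}\leq\Big(r+\tfrac{cr}{2}\Big)^{2N},
\]
so $k+1\leq\big((2+c)/c\big)^{2N}$, and since the left-hand side is an integer we may set $m:=\big\lfloor\big((2+c)/c\big)^{2N}\big\rfloor$, a quantity depending only on $c$ and $N$, hence only on $N$. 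Thus the maximal degree of $G$ is at most $m-1$, and $m\geq 2$ because $(2+c)/c>5$ for $c<1/2$.

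It remains to color $G$ greedily with the colors $1,\dots,m$: listing the vertices of $F$ in any order and assigning to each the smallest color avoided by its already-colored neighbours, one never exhausts the $m$ colors since every vertex has at most $m-1$ neighbours. Letting $F_j$ be the set of vertices receiving color $j$, the coloring places adjacent vertices in distinct classes, so any two distinct points of the same $F_j$ are non-adjacent in $G$, i.e. $|p-q|\geq r$; this is (i). As the $F_j$ partition $F$, their union is $F$ and (ii) holds as shown. The only substantive step is the uniform degree bound: everything else is formal, and I expect the packing estimate to be the main point to get right — in particular the choice of the disjoint-ball radius $cr/2$ and the enclosing radius $r+cr/2$, which together force $m$ to depend on the dimension alone and on the fixed separation ratio $c$, the $r$-dependence cancelling by scaling.
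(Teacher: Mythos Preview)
Your argument is correct. The paper does not give its own proof of this lemma: it is simply quoted from \cite[Lemma~2.1]{AlaGlobLop} ``for notational convenience'', so there is nothing to compare against here. Your approach --- take a maximal $cr$-separated subset of $\S_N$ to get the net $F$ and property~(ii), bound the degree of the ``closeness'' graph by a volume packing estimate, and then greedily $m$-color --- is the standard one and goes through cleanly; the scaling in the packing bound is handled correctly, and the floor in the definition of $m$ is justified since $k+1$ is an integer.
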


\begin{proof}[Outline of the proof of Lemma \ref{lem1}]Up to a translation and an $\mathbb R$-linear change of coordinates we can assume that $x=(1,0)\in\C\times \C^{n-1}$ and that near $x$ a defining function $r$ of $\partial D$ is of the form $r(z) = \Vert z\Vert ^2 -1 + o(\Vert z-1\Vert^2)$. Then there are two open balls $U_1$ and $U_2$ of $(1,0)$ and a diffeomorphism $\Phi:U_1\to U_2$ that maps $U_1 \cap D$ onto $U_2 \cap \B_N$. Upon shrinking the $U_i$'s, we can assume that $\Phi$ is arbitrarily close to the identity map in $\mathcal C^2$-topology. Let $U$ be any relatively compact ball in $U_1$, $(1,0)\in U$. 
	
Let $m$ and $c$ be given by Lemma \ref{lem2}. Following \cite{AlaGlobLop}, we fix a sequence $(s_j)$ of positive numbers, increasing, tending to $1$ and such that $\sum _j\sqrt{s_j - s_{j-1}} = \infty$. We set $s_{j,k} := s_{j-1} + k (s_j - s_{j-1})/(m+1)$.
Let $S_{j,k}$ denote $\Phi^{-1}(U_2 \cap s_{j,k} \mathbb S_N)$. Then there is a uniform constant $a>0$ such that any tangent ball 
with origin in $S_{j,k}$ and radius $r_j:=a \sqrt{s_j - s_{j-1}}$ does not intersect $S_{j, k+1}$, and any ball centred in $S_{j,k}\setminus U_1$ does not intersect $U$. We fix $t>1$ such that $tc<1/2$. Let now $F_1,\ldots,F_m$ be given by Lemma \ref{lem2} applied to $r:=2tr_j$ and denote by 
$E_{j,k}$ the set $\Phi^{-1} (s_{j,k}F_k \cap U_2)$. Let us denote by $\Gamma_{j,k,p}$ the tangent ball with origin $p\in E_{j,k}$ ans radius $r_j$. Observe that upon choosing $\Phi$ close enough to the identity map, in a way which depends only on $t>1$ - hence only on $N$, the sets $\Gamma_{j,k,p}$ can be separated by hyperplanes from $\bigcup_{(j',k',p')\prec(j,k,p)}\Gamma_{j',k',p'}$, where $\prec$ is the lexicographical order. 
Following the proof from \cite{AlaGlobLop} one easily checks that $\Gamma: =\bigcup_{j=1}^{J}\bigcup_{k,p} \Gamma_{j,k,p}$ satisfies the desired properties for some $J$ large enough.
\end{proof}

\begin{rem}\label{rem1}(1) Observe that each connected component of $\Gamma$ is the image of a $2N-1$ dimensional ball under an $\mathbb R$-linear isomorphism.
	
\noindent{}(2) Note that setting $\Gamma:=\bigcup_{j=J}^{J'}\bigcup_{k,p} \Gamma_{j,k,p}$ in the proof of Lemma \ref{lem2}, $J$ and $J'$ can be chosen large enough so that $\Gamma$ is in any given $\epsilon$-neighbourhood of $\partial D$ and separated by a hyperplane from any given compact set in $D$.
\end{rem}


The second step consists in extending \cite[Theorem 1.5]{AlaFor} to strictly pseudoconvex domains, using Lemma \ref{lem1}. This is the purpose of the next lemma.

\begin{lemme}\label{lem3}Let $D$ be a strictly pseudoconvex domain in $\C^N$ and let $K$ be a compact subset of $D$. Then for any $M>0$ there is a compact set $\Gamma$ in $D\setminus K$ such that $\Gamma \cup K$ is $\mathcal{O}(D)$-convex, with the property that any continuous path $\gamma:[0,1)\rightarrow D\setminus \Gamma$, $\gamma(0)\in K$ and $\lim_{r\rightarrow 1}\gamma(r) \in \partial D$, has length greater than $M$. 
%

$\Gamma$ can be chosen so that each of its connected components is holomorphically contractible. If additionally $D$ is Runge, the connected components of $\Gamma$ can even be chosen as the images of $2N-1$-dimensional balls under $\R$-linear isomorphisms and automorphisms of $\mathbb C^N$ .
\end{lemme}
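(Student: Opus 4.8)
The plan is to reduce the statement to the strictly convex case already treated in Lemma \ref{lem1} by convexifying $\partial D$ locally near each boundary point, and then to glue the resulting local labyrinths into a single $\mathcal{O}(D)$-convex set. Since $D$ is strictly pseudoconvex, $\partial D$ is compact and at each $x\in\partial D$ the Levi form is positive definite; a quadratic, hence local biholomorphic, change of coordinates $\Phi_x$ then maps a neighborhood of $x$ onto a piece of a strictly convex domain, with $\Phi_x$ as close to an $\mathbb R$-linear map as we wish on a small ball $U_x$. First I would cover $\partial D$ by finitely many such balls $U_1,\dots,U_p$, with convexifying maps $\Phi_1,\dots,\Phi_p$, chosen so that the sets $U_i\cap D$ cover a collar $\{x\in D:\mathrm{dist}(x,\partial D)<\delta\}$ of the boundary. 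In the Runge case this is exactly the point where I would invoke the result of Diederich, Fornaess and Wold \cite{Died-For-Wold}: as $\overline D$ is then polynomially convex, each local convexifying $\Phi_i$ can be replaced by an arbitrarily good approximation on $\overline{U_i}$ by an automorphism of $\C^N$, so that the whole construction can be transported by global automorphisms rather than by mere local biholomorphisms.

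Next I would run Lemma \ref{lem1} in each convexified chart. Applying it to the strictly convex domain $\Phi_i(U_i\cap D)$ and to the relevant trace of $\Phi_i(K)$ produces a local labyrinth $\widetilde\Gamma_i=\bigcup_k\widetilde\Gamma_{i,k}$ whose components are convex bodies in hyperplanes, that is, $\mathbb R$-linear images of $(2N-1)$-balls by Remark \ref{rem1}(1). I would then pull it back, setting $\Gamma_i:=\Phi_i^{-1}(\widetilde\Gamma_i)$ and $\Gamma:=\bigcup_{i=1}^p\Gamma_i$. Because $\Phi_i^{-1}$ is a biholomorphism, each component of $\Gamma_i$ is holomorphically contractible; in the Runge case $\Phi_i^{-1}$ is an automorphism of $\C^N$, so each component is the image of a $(2N-1)$-ball under an $\mathbb R$-linear isomorphism followed by an automorphism of $\C^N$, as required. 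Using Remark \ref{rem1}(2) I would place each $\widetilde\Gamma_i$ in an arbitrarily thin shell against the convexified boundary and $\mathcal{O}$-separated from any prescribed compact; transporting back, $\Gamma$ then lies in an arbitrarily thin collar of $\partial D$ and, in particular, is disjoint from $K$.

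For the length estimate, let $\gamma:[0,1)\to D\setminus\Gamma$ satisfy $\gamma(0)\in K$ and $\lim_{r\to1}\gamma(r)=x_0\in\partial D$. Since $\gamma(r)\to x_0$, for $r$ close enough to $1$ the path stays in the chart $U_{i_0}$ containing $x_0$; moreover, as it reaches $\partial D\cap U_{i_0}$, which lies beyond all the layers $S_{j,k}$ threading the labyrinth, it crosses at least as many layers as any path exiting to $\partial U_{i_0}\cap D$, so the length estimate of Lemma \ref{lem1}(ii) applies a fortiori in the convex picture. It then suffices to have run Lemma \ref{lem1} with a threshold $M'$ large enough to absorb the uniform bi-Lipschitz distortion of $\Phi_{i_0}$ on $\overline{U_{i_0}}$, so that the Euclidean length of $\gamma$ exceeds $M$; only finitely many layers are needed to pass $M$.

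The remaining, and in my view the main, difficulty is the $\mathcal{O}(D)$-convexity of $\Gamma\cup K$. I would obtain it by iterating Kallin's lemma, for which I need $K$ and each $\Gamma_i$ to be $\mathcal{O}(D)$-convex and pairwise $\mathcal{O}(D)$-separated. In the convex setting of Lemma \ref{lem1} the required separations were free, being realized by genuine hyperplanes; here they must be promoted to global holomorphic separating functions on $D$. This is where the construction is genuinely more delicate than in \cite{AlaGlobLop}: the hyperplanes separating the pieces $\widetilde\Gamma_{i,k}$ live in the convexified coordinates and compose with $\Phi_i$ only to \emph{locally} defined holomorphic functions. I expect to resolve this using strict pseudoconvexity, namely local holomorphic peak and support functions at boundary points together with Oka--Weil approximation on the pseudoconvex domain $D$, to manufacture global functions in $\mathcal{O}(D)$ reproducing these local separations; in the Runge case the global automorphisms furnished by \cite{Died-For-Wold} already provide such separating functions directly. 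Controlling simultaneously the globalization of the convexifying maps and of the separating functions, while keeping $\Gamma$ confined to a thin collar, is the crux of the argument.
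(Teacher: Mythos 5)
Your overall architecture (local convexification near the boundary, Lemma \ref{lem1} in each chart, pull back, glue with Kallin) is the right general shape, but two essential points are missing, and both are where the actual work of the paper lies.

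First, the length estimate across charts does not follow from your argument. You claim that since $\gamma(r)\to x_0\in\partial D$, the path eventually stays in the chart $U_{i_0}$ containing $x_0$ and therefore ``crosses at least as many layers'' as a path covered by Lemma \ref{lem1}(ii). This is not so: the hypothesis of Lemma \ref{lem1}(ii) requires the path to pass through $K\cap U$ \emph{and then remain in $U$}, i.e.\ to traverse the local labyrinth from its innermost layer outward. A path converging to $x_0$ may commit to $U_{i_0}$ only after it is already deep in the boundary collar, having burrowed past the outer layers by weaving through neighbouring charts, so that inside $U_{i_0}$ it meets only the last few layers of $\Gamma_{i_0}$ and no useful lower bound results. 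The paper's resolution is precisely the combinatorial gluing you omit: the finitely many charts are listed in a sequence in which each occurs infinitely often, the labyrinths $\Gamma_1,\Gamma_2,\dots$ are built nested (each $\Gamma_{j+1}$ inside the $\eta_j$-neighbourhood of $\partial D$ determined by its predecessors), and the charts are shrunk so that, inside the collar $V$, the boundaries of distinct charts are at mutual distance $\geq\delta$. Then every escaping path either traverses a full local labyrinth within a single chart (length $>M$) or switches charts at least $n$ times with $n\delta>M$. Without this counting the lemma is not proved.

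Second, you correctly identify the $\mathcal{O}(D)$-convexity of $\Gamma\cup K$ as the crux, but you leave it unresolved (``I expect to resolve this using peak functions and Oka--Weil''). The paper's device is that the Diederich--Fornaess--Wold theorem is invoked for \emph{every} strictly pseudoconvex $D$, not only in the Runge case: it furnishes a \emph{global} holomorphic embedding $\Phi_x:\overline D\to\overline{\B}_N$ exposing the boundary point $x$, Lemma \ref{lem1} is applied inside the ball, and the separating hyperplanes there compose with $\Phi_x$ to give functions holomorphic on all of $\overline D$, so Kallin's theorem applies directly. Your purely local convexification by quadratic changes of coordinates is exactly what produces only locally defined separating functions and leaves you stuck; replacing it by the global embedding of \cite{Died-For-Wold} is the step that makes the separation, and hence the $\mathcal{O}(D)$-convexity, go through.
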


\begin{proof}
Let $K$ be fixed as in the statement. By \cite[Theorem 1.1]{Died-For-Wold}, 
there exists an open ball $U$ centred at $x$ 
and a holomorphic embedding $\Phi_x:\overline D \to \overline \B_N$ such that $\Phi_x(U)$ and some $\Gamma' \subset \Phi_x(D)$ satisfy the conclusion of Lemma \ref{lem1} for some constant $M'>0$.
Upon choosing $M'$ large enough - in a way which depends only on $\Phi_x$, the set $\Gamma := \Phi_x^{-1}(\Gamma')$ satisfies that any continuous path $\gamma$ in $D\setminus \Gamma$ connecting $K$ to $U\cap \partial D$ and satisfying $\gamma(r_0)\in K'\cap U$ for some $r_0\in [0,1)$ and $\gamma(r)\in U$, $r_0<r<1$, has length greater than $M$. Moreover, by Remark \ref{rem1} (2) and Kallin's theorem, $\Gamma$ can also be chosen so that $\Gamma \cup K$ is $\mathcal{O}(D)$-convex and contained in an $\epsilon$-neighbourhood of $\partial D$ for any given $\epsilon >0$.

Let us then consider a finite covering of $\partial D$ by such open balls $U_{x_1},\ldots,U_{x_k}$. Upon slightly shrinking the $U_{x_j}$'s, we may and shall assume that there exist $\delta, \eta >0$ and an open $\eta$-neighbourhood $V$ of $\partial D$ such that the distance between $ V \cap \partial U_{x_j}$ and $ V\cap \partial (\bigcup_{i\neq j} U_{x_i})$ is greater than $\delta$ for $j=1,\ldots, k$. From now on, let us fix $K'=D\setminus V$. Observe that $K'\cap U_j\neq \emptyset$ for any $j$ and that upon choosing $\eta$ small enough, we shall assume that $K\subset K'$. Let us enumerate the sets $U_{x_i}$ as a sequence $(U_j)$ in such a way that for any $i=1,\ldots,k$ there exist infinitely many $j$ such that $U_j=U_{x_i}$. We denote by $\Phi_j$ the mapping corresponding to $U_j$ and given by \cite{Died-For-Wold}. Following the above procedure with $K'=D\setminus V$, we build a labyrinth $\Gamma_1$ in $D$ and such that:

\begin{enumerate}\item [(i)] {$\Gamma_1$ is contained in $V$} and $\Gamma _1 \cup K$ is $\mathcal{O}(D)$-convex;
\item [(ii)] Any continuous path $\gamma$ in $D\setminus \Gamma _1$ connecting $K$ to $U_1\cap \partial D$ and satisfying $\gamma(r_0)\in K'\cap U_1$ for some $r_0\in [0,1)$ and $\gamma(r)\in U_1$, $r_0<r<1$,  has length greater than $M$.
\end{enumerate}

Assuming that $\Gamma_1,\ldots,\Gamma_j$ have been built, we build $\Gamma_{j+1}$ in $D$ such that:

\begin{enumerate}\item [(i)] $\Gamma_{j+1}$ is contained in an $\eta_j$-neighborhood of $\partial D$, where $\eta_j$ is the distance from $\partial D$ to $\Gamma_1\cup\ldots \cup \Gamma_j$, and $\bigcup_{i=1}^{j+1}\Gamma _{i} \cup K$ is $\mathcal{O}(D)$-convex;
\item [(ii)] Any continuous path $\gamma$ in $D\setminus \Gamma _{j+1}$ connecting $K$ to $U_{j+1}\cap \partial D$ and satisfying $\gamma(r_0)\in K'\cap U_{j+1}$ for some $r_0\in [0,1)$ and $\gamma(r)\in U_{j+1}$, $r_0<r<1$,  has length greater than $M$.
\end{enumerate}

It is now easily checked that there exists $J\in \N$ big enough so that $\Gamma := \bigcup_{j=1}^J \Gamma_j$ has the desired property. Actually, it is enough to take $J$ such that each $U_{x_j}$ appears in a sequence $(U_j)$ at least $n$ times, where $n\delta >M$. Indeed, let $\gamma$ be a path in $D$ with $\gamma(0)\in K$ and $\lim _{r\rightarrow 1}\gamma(r)\in \partial D$. Since $K\subset D\setminus V$ and $\gamma$ is continuous, without loss of generality, we may assume, up to re-parametrization, that $\gamma([0,1))\subset V$. 
If there exists $0<r_1<r_2<1$ and $j\leq J$ such that $\gamma(r)\in U_j$ for any $r_1\leq r\leq r_2$ and $\gamma(r_1)\in V_{j-1}$ and $\gamma (r_2) \in V_j$, where $V_j$ is the $\eta_j$-neighbourhood appearing in the construction, then the length of $\gamma$ is clearly greater than $M$. 
If not, it means that $\gamma$ has to escape from some $U_j$ as many times as it may have to pass through some $\Gamma_j$. With $J$ chosen as above, $\gamma$ would then have to pass at least $n$ times from a $U_j$ to another. Since the image of $\gamma$ is in $V$ and the distance between $ V \cap \partial U_{x_j}$ and $ V\cap \partial (\bigcup_{i\neq j} U_{x_i})$ is greater than $\delta$ for $j=1,\ldots, k$, the length of $\gamma$ has to bigger than $n\delta$.

The $\mathcal{O}(D)$-convexity of $\Gamma \cup K$ proceeds from the construction and Kallin's theorem recalled above. Observe that the last assertion of the lemma directly follows from the construction.
%
%
%

\end{proof}

The third and last step is straightforward: Given $D$ a pseudoconvex domain, we consider an exhaustion $(D_n)$ of $D$ by $\mathcal{O}(D)$-convex strictly pseudoconvex domains (Runge strictly pseudoconvex domains if $D$ is Runge) and inductively apply Lemma \ref{lem3}. For the existence of such an exhaustion, we refer to \cite{Forstneric}.



\end{document}